\documentclass[12pt]{amsart}
\usepackage{amsrefs,amssymb,amsmath,amssymb,amsthm}
\usepackage[ocgcolorlinks,linktoc=all]{hyperref}
\usepackage{lipsum}
\usepackage{esint}
\usepackage{enumerate}

\hypersetup{citecolor=blue,linkcolor=magenta}
\newtheorem{theorem}{Theorem}[section]

\newtheorem{lemma}[theorem]{Lemma}

\newtheorem{assumption}[theorem]{Assumption}

\theoremstyle{definition}
\newtheorem{definition}[theorem]{Definition}

\theoremstyle{remark}
\newtheorem{remark}[theorem]{Remark}
\numberwithin{equation}{section}

\newcommand{\OP}{\Lambda_p^{\varphi}}
\newcommand{\Op}{\Lambda_p}
\newcommand{\Sp}{\mathbb{S}^{n-1}}

\begin{document}

\title[Iterations of curvature images]
 {Iterations of curvature images}
\author[M. N. Ivaki]{Mohammad N. Ivaki}
\address{Department of Mathematics, University of Toronto, Ontario,
M5S 2E4, Canada}
\email{m.ivaki@utoronto.ca}
\dedicatory{}
\subjclass[2010]{}
\keywords{}
\begin{abstract}
We study the iterations of a class of curvature image operators $\Lambda_p^{\varphi}$ introduced by the author in (J. Funct. Anal. 271 (2016) 2133--2165). The fixed points of these operators are the solutions of the $L_p$ Minkowski problems with the positive continuous prescribed data $\varphi$. One of our results states that if $p\in (-n,1)$ and $\varphi$ is even, or if $p\in (-n,-n+1]$, then the iterations of these operators applied to suitable convex bodies sequentially converge in the Hausdorff distance to fixed points.
\end{abstract}

\maketitle
\section{Introduction}
The setting here is $n$-dimensional Euclidean space. Let $\varphi\in C(\Sp)$ be a positive continuous function defined on the unit sphere. Suppose either $\varphi$ is even (i.e., it takes the same value at antipodal points) and $p\in (-n,1),$ or $p\in (-n,-n+1]$. Using an iteration method, we show that there exists a convex body $K$ with support function $h_K$ and curvature function $f_K$ such that
\begin{align}\label{lp problem}
\varphi h_K^{1-p}f_K=\textrm{const}.
\end{align}
While the existence of solutions to \eqref{lp problem} has been known in this range of $p$ since the work of Chou--Wang \cite{CW}, we use a notion of generalized curvature image to add a novel existence method to the literature on the $L_p$ Brunn-Minkowski theory.

Let us briefly recall the origin and the historical context of (\ref{lp problem}). For any $x$ on the boundary of a convex body $K,$ $\nu_K(x)$ is the set of all unit exterior normal vectors at $u.$ The surface area measure of $K$, $S_K$, is a Borel measure on the unit sphere defined by
\[S_K(\omega)=\mathcal{H}^n(\nu_K^{-1}(\omega))\quad\textrm{for all Borel sets}~ \omega~ \textrm{of}~\Sp.\]
Here, $\mathcal{H}^n$ denotes the $n$-dimensional Hausdorff measure. If $K$ has a positive continuous curvature function, then $dS_K=f_Kd\sigma,$ where $\sigma$ is the spherical Lebesgue measure.

The classical Minkowski problem is one of the corner stones of the Brunn-Minkowski theory. It asks what are the necessary and sufficient conditions on a Borel measure $\mu$ on $\Sp$ in order to be the surface area measure of a convex body. The complete solution to this problem was found by Minkowski, Aleksandrov and Fenchel and Jessen (see, e.g., Schneider \cite{Schneider}): A Borel measure $\mu$ whose support is not contained in a closed hemisphere is the surface area measure of a convex body if and only if
\[\int_{\Sp} ud\mu(u)=o.\]
Moreover, the solution is unique up to translations.

The $L_p$ Minkowski asks what are the necessary and sufficient conditions on a Borel measure $\mu$ on $\Sp$, such that there exists a convex body $K$ with support function $h_K$, so that
\[h_K^{1-p}dS_K=\gamma d\mu \quad\hbox{for some constant}~\gamma>0.\]
This problem for $p>1$ was put forward by Lutwak \cite{Lutwak1993} almost a century after Minkowski's original work and stems from the $L_p$ linear combination of convex bodies. See \cites{Schneider,BBCY,BBC,CW,Stancu02,Stancu03,LO,BLYZa,JLW,BT17,CHLL} regarding the $L_p$ Minkowski problem and Lutwak \textit{et al}. \cite{LYZapp} for an application.

To motivate our iteration scheme, let us briefly recall a few observations from Lutwak \cite{Lutwak86}. Suppose $K$ has its Santal\'{o} point at the origin. Then by Minkowski's existence theorem (see, e.g., \cite{Busemann}*{pp. 60--67}), there exists a convex body $\Lambda K$, uniquely determined up to translations, whose curvature function is given by
\[f_{\Lambda K}=\frac{V(K)}{V(K^{\ast})}\frac{1}{h_K^{n+1}}.\]
Here, $K^{\ast}$ is the polar body and $V(\cdot)$ is the $n$-dimensional Lebesgue measure.
We always choose $\Lambda K$ such that its Santal\'{o} point is at the origin. The curvature image operator $\Lambda$ was introduced by Petty \cite{Petty}. See \cite{Schneider}*{Section 10.5} for the importance of the curvature image in affine differential geometry.

Write $\Omega(K)$ for the affine surface area of $K$ (Definition \ref{def} with $\varphi\equiv 1, p=-n$). By a straightforward calculation,
\[\Omega(\Lambda K)^{n+1}=n^{n+1}V(K)^{n}V(K^{\ast}).\]
On the other hand, for any convex body $L$ with the origin in its interior by the H\"{o}lder inequality, we have
\[\Omega(L)^{n+1}\leq n^{n+1}V(L)^nV(L^{\ast}).\]
Hence, using this inequality for $L=K$ and $L=\Lambda K$, we see
\begin{align*}
\Omega(\Lambda K)&\geq \Omega(K)\\
V(K)V(K^{\ast})&\leq \left(\frac{V(\Lambda K)}{V(K)}\right)^{n-1}V(\Lambda K)V((\Lambda K)^{\ast}).
\end{align*}
By Minkowski's mixed volume inequality, we have \[V(\Lambda K)\leq V(K) .\]
Moreover, using the affine isoperimetric inequality,
\[V(\Lambda K)^{n-1}\geq \frac{\Omega(\Lambda K)^{n+1}}{n^{n+1}V(B)^2}\geq \frac{\Omega(K)^{n+1}}{n^{n+1}V(B)^2},\]
where $B$ denotes the unit ball. Therefore, we arrive at
\[\left(\frac{\Omega(K)^{n+1}}{n^{n+1}V(B)^2}\right)^{\frac{1}{n-1}}\leq V(\Lambda K)\leq V(K).\]
Let us put $\Lambda^i K:=\underbrace{\Lambda\cdots\Lambda}\limits_{i~\textrm{times}} K.$
By induction, we obtain
\begin{align*}
V(\Lambda ^{i-1}K)V((\Lambda ^{i-1}K)^{\ast})&\leq \left(\frac{V(\Lambda^i K)}{V(\Lambda^{i-1}K)}\right)^{n-1} V(\Lambda ^{i}K)V((\Lambda ^{i}K)^{\ast}),\\
 \left(\frac{\Omega(K)^{n+1}}{n^{n+1}V(B)^2}\right)^{\frac{1}{n-1}}&\leq V(\Lambda^iK)\leq V(K).
\end{align*}

To sum up these observations, we have seen the curvature image under the operator $\Lambda$ strictly increases (unless it is applied to an origin-centered ellipsoid; see Marini--De Philippis \cite{MP} regarding the fact that the only solutions of $\Lambda K=K$ are origin-centered ellipsoids) the volume product functional, while $\{V(\Lambda^i K)\}_i$ is uniformly bounded above and below away from zero.

The previous observations motivate us to seek a curvature image operator $\Lambda^{\varphi}_{p}$ (see Definition \ref{def curv op}) that satisfies the following three rules.
\begin{enumerate}
  \item The fixed points, $\Lambda^{\varphi}_{p}L=L$, are solutions of (\ref{lp problem}).
  \item The curvature image under $\Lambda^{\varphi}_{p}$ \emph{strictly} increases a ``suitable" functional, unless $\Lambda^{\varphi}_{p}$ is applied to a solution of (\ref{lp problem}).
  \item There are uniform lower and upper bounds on the volume after applying any number of iteration.
\end{enumerate}
Put $(\OP)^i K:=\underbrace{\OP\cdots\OP}\limits_{i~\textrm{times}} K.$ When $\varphi\equiv 1$, we use $\Op$ in place of $\OP.$
\begin{theorem}\label{main theorem} The following statements hold:
\begin{enumerate}
  \item  suppose either
  \begin{itemize}
  \item $-n<p<1$, $\varphi\in  C(\Sp)$ is positive and even, and $K$ is origin-symmetric, or
  \item $-n< p\leq -n+1$, $\varphi\in  C(\Sp)$ is positive, $K$ contains the origin in its interior and
  \[\int_{\Sp}\frac{u}{(\varphi h_K^{1-p})(u)}d\sigma=o.\]
\end{itemize}
Then a subsequence of iterations $\{(\OP)^{i}K\}_i$ converges in the Hausdorff distance, as $i\to\infty$, to a convex body $L$ such that
\[\varphi h_L^{1-p}f_L=\textrm{const}.\]
  \item If  $-n<p<1$, and $K$ contains the origin in its interior and
  \[\int_{\Sp}uh_K^{p-1}(u)d\sigma=o,\] then $\{\Lambda_p^{i}K\}_i$ converges in the Hausdorff distance, as $i\to\infty$, to an origin-centered ball.
  \item If $p=-n$ and $K$ has its Santal\'{o} point at the origin, then there exists a sequence of volume-preserving transformations $\ell_i$, such that $\{\ell_i\Lambda^{i}K\}_i$ converges in the Hausdorff distance, as $i\to\infty$, to an origin-centered ball.
\end{enumerate}
\end{theorem}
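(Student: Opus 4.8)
The plan is to build the operator and an accompanying functional so that the three heuristic rules above are satisfied, and then let a soft compactness argument finish the job. By construction (Definition~\ref{def curv op}), $\OP K$ is the convex body whose curvature function is a positive multiple of $\varphi^{-1}h_K^{p-1}$, the multiple and the translate being pinned by a normalization that in the even case says ``$\OP K$ is origin-symmetric'' and in general is the balance condition $\int_{\Sp}u\,(\varphi h_{\OP K}^{1-p})^{-1}\,d\sigma=o$; thus $\OP L=L$ is exactly \eqref{lp problem}. As the ``suitable'' functional I would take the generalized $L_p$-affine surface area $\Omega_p^{\varphi}$ of Definition~\ref{def}, and prove the monotonicity $\Omega_p^{\varphi}(\OP K)\ge\Omega_p^{\varphi}(K)$ by the two-step mechanism visible in the motivating discussion of $\Lambda$: a H\"older-type inequality bounding $\Omega_p^{\varphi}(\OP K)$ by a mixed functional of $K$, with equality iff $\varphi h_K^{1-p}f_K$ is constant, followed by Minkowski's mixed-volume inequality returning that mixed functional to $\Omega_p^{\varphi}(K)$. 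Propagating the two equality cases shows the inequality is strict unless $K$ already solves \eqref{lp problem}.

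The real work is the a priori control demanded by rule~(3): one needs a fixed compact class $\mathcal K$ of convex bodies, depending only on $K,p,\varphi$, that contains every iterate $(\OP)^iK$ — concretely, a uniform two-sided volume bound together with a uniform lower bound on the inradius and upper bound on the circumradius, plus control of the position (automatic in the even case, and in general obtained by showing the balance condition propagates and hence pins the Steiner point). Here $p>-n$ is indispensable: it is the integrability threshold keeping $\Omega_p^{\varphi}$ finite and bounded away from zero, and keeping the iterates from collapsing to a lower-dimensional convex set or escaping to infinity; for $p\le-n$ the estimates fail. Granting $\mathcal K$, Blaschke selection gives $(\OP)^{i_k}K\to L$ in the Hausdorff metric; since $\{\Omega_p^{\varphi}((\OP)^iK)\}_i$ is nondecreasing and bounded it converges, so its successive increments tend to zero; and since $\OP$ and $\Omega_p^{\varphi}$ are continuous on $\mathcal K$ — the continuity of $\OP$ reducing to continuous dependence of the Minkowski-problem solution, of the normalizing multiple, and of the chosen translate on the body, all valid away from degeneracy — passing to the limit yields $\Omega_p^{\varphi}(\OP L)=\Omega_p^{\varphi}(L)$, whence $\OP L=L$ by the equality characterization. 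This is part~(1).

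For part~(2), $\varphi\equiv1$ and $-n<p<1$, I would upgrade subsequential to full convergence by invoking the uniqueness of \eqref{lp problem} with constant data available in this range (cf. Chou--Wang \cite{CW}): the only convex body with the origin in its interior solving $h_L^{1-p}f_L=\mathrm{const}$ is an origin-centered ball — the linearization at the ball has no nonconstant kernel, since the exceptional exponents $p=1$ and $p=-n$ are excluded — so by part~(1) every subsequential Hausdorff limit of $\{\Op^iK\}_i$ is that single ball, and the whole sequence converges to it, its radius determined by the normalization built into $\Op$ (equivalently, by the limiting value of $\Omega_p^{1}$). For part~(3), $p=-n$ and $\varphi\equiv1$, uniqueness fails — the fixed points of $\Lambda$ are \emph{all} origin-centered ellipsoids (Marini--De Philippis \cite{MP}) — but $\Lambda$ is now equivariant under volume-preserving linear maps and $\Omega=\Omega_{-n}^{1}$ is invariant under them. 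I would choose $\ell_i\in SL(n)$ putting $\Lambda^iK$ into John position rescaled to a fixed volume; the normalized iterates $M_i=\ell_i\Lambda^iK$ satisfy $M_{i+1}=g_i\Lambda(M_i)$ with $g_i\in SL(n)$, form a precompact family, and $\Omega(M_i)=\Omega(\Lambda^iK)$ still increases to a limit, so the pinching argument of part~(1) performed modulo $SL(n)$ shows any subsequential limit $L$ of $\{M_i\}$ has $\Omega(\Lambda L)=\Omega(L)$, hence is a fixed point of $\Lambda$ up to a linear map, hence an ellipsoid by \cite{MP}; an ellipsoid in John position of the prescribed volume is a ball, so all subsequential limits coincide and $\{\ell_i\Lambda^iK\}_i$ converges to an origin-centered ball.

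The step I expect to be the main obstacle is exactly the uniform non-degeneracy of the iterates, i.e. the existence of the compact class $\mathcal K$: once it is in hand everything else is soft, but producing volume, inradius, and circumradius bounds that survive infinitely many applications of the Monge--Amp\`ere-type inversion defining $\OP$ — and, in the non-even case, showing the balance normalization is itself stable under iteration — is precisely where the hypotheses $p>-n$, evenness of $\varphi$, or $p\le-n+1$ must each be used in turn, and where the a priori estimates of Chou--Wang \cite{CW} for the fixed-point problem \eqref{lp problem} have to be reworked for the one-step operator $\OP$.
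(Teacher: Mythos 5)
Your overall architecture---a monotone functional, uniform non-degeneracy of the iterates, Blaschke selection, continuity of $\OP$, and an equality case to identify the limit as a fixed point---is exactly the paper's, but two steps of your plan do not close as written. First, the a priori control you correctly single out as the crux, namely two-sided bounds $a\le h_{(\OP)^iK}\le b$ uniform in $i$: you leave this as Chou--Wang estimates ``to be reworked,'' whereas the paper gets it for free from Lemma \ref{lem1}(3) (uniform volume bounds for all iterates) combined with \cite{Ivaki2016}*{Theorem 7.4}, a general statement that sequences obeying the three rules of the introduction are uniformly non-degenerate; in your write-up this remains an open gap. Second, and more substantively, your choice of $\Omega_p^{\varphi}$ as the pinching functional breaks down at the limit passage: $\Omega_p^{\varphi}$ is built from the curvature function and is only upper semicontinuous (at best) under Hausdorff convergence, so from $(\OP)^{i_j}K\to L$ you cannot conclude $\Omega_p^{\varphi}((\OP)^{i_j}K)\to\Omega_p^{\varphi}(L)$, nor extract the equality case of the H\"older inequality at the limit body. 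The paper runs the pinching through $\mathcal{A}_p^{\varphi}$, which involves only $V$ and $h_K$ and is therefore continuous: the chain
\[\mathcal{A}_p^{\varphi}((\OP)^iK)\leq \left(\frac{V((\OP)^{i+1}K)}{V((\OP)^{i}K)}\right)^{n-1}\mathcal{A}_p^{\varphi}((\OP)^{i+1}K),\qquad V((\OP)^{i+1}K)\leq V((\OP)^{i}K),\]
forces the volume ratio to $1$, hence $V(L)=V(\OP L)$ along the subsequence, and then the equality case of Minkowski's mixed volume inequality (via $V_1(\OP L,L)=V(L)$) gives $\OP L=L$; every quantity in that route is Hausdorff-continuous. (A sign quibble: for $p\in(0,1)$ the exponent $\frac{n(p-1)}{p(n-1)}$ is negative, so $\Omega_p^{\varphi}$ itself decreases; only the stated power is monotone increasing.)

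Two smaller points. In part (2), the uniqueness of solutions of $h^{1-p}f=\mathrm{const}$ for $-n<p<1$ is not in Chou--Wang and is not a linearization statement---it is the global theorem of Brendle--Choi--Daskalopoulos \cite{BCD}, which is what the paper invokes; your ``no nonconstant kernel'' argument would at most give local uniqueness. In part (3), your bookkeeping $M_{i+1}=g_i\Lambda(M_i)$ leads you to conclude only that the limit is a fixed point ``up to a linear map,'' which is not the hypothesis of \cite{MP} and would require either compactness of the $g_i$ or a strengthened rigidity statement. The paper avoids this by never renormalizing the image: since $\Lambda$ is $SL(n)$-equivariant, $\ell_{i_j}\Lambda^{i_j+1}K=\Lambda(\ell_{i_j}\Lambda^{i_j}K)\to\Lambda L$ directly, the $SL(n)$-invariant, monotone, and bounded volume product gives $V(\Lambda L)=V(L)$, and then $\Lambda L=L$ on the nose, so \cite{MP} applies verbatim. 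Petty's minimal surface-area position plays the role of your John position; either normalization forces the limiting ellipsoid to be a ball, and the subsequence-independence argument via the convergent volume is the same in both treatments.
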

\begin{remark}
Each convex body after a suitable translation satisfies the required integral condition in the theorem (see, e.g., \cite{Ivaki2016}*{Lemma 3.1}).
\end{remark}
Iterations methods in convex geometry were previously applied in \cites{FNRZ,ZS} and as smoothing tools in \cites{Ivaki2017,Ivaki2018} to prove local uniqueness of fixed points of a certain class of operators. Also to deduce the asymptotic behavior of a class of curvature flows in \cites{Ivaki2016,Ivakitran,BIS} and to prove a stability version of the Blaschke--Santal\'{o} inequality in the plane \cite{Ivaki2015} we used some properties of the curvature image operators. We mention that the unique convex body of maximal affine perimeter contained in a given two-dimensional convex body is (up to translations) a curvature image body (see B\'{a}r\'{a}ny \cites{Barany97,Barany06}). Moreover, Schneider \cite{Schneider14} proved that in any dimension a curvature image body uniquely possess the maximal affine surface area among all convex bodies contained in it.
\section*{Acknowledgment}
MI has been supported by a Jerrold E. Marsden postdoctoral fellowship from the Fields Institute. The author would like to thank the referee for suggestions that led to improvement of this article.
\section{Background and notation}
A compact convex set with non-empty interior is called a convex body.
The set of convex bodies is denoted by $\mathcal{K}.$ Write $\mathcal{K}_o,\mathcal{K}_e$, respectively, for the set of convex bodies containing the origin in their interiors and the origin-symmetric convex bodies. We write $ C^+(\Sp)$ for the set of positive continuous functions and $ C_e^+(\Sp)$ for the set of positive continuous even functions on the unit sphere.

The support function of a convex body $K$ is defined as
\[h_K(u)=\max_{x\in K}\langle x,u\rangle. \]
For a convex body $K$ with the origin $o$ in its interior, the polar body $K^{\ast}$ is defined by
\[K^{\ast}=\{y: \langle x,y \rangle\leq 1~\forall x\in K\}.\]
For $x\in \operatorname{int}K$, we set $K^x=(K-x)^{\ast}$. The Santal\'{o} point of $K$, denoted by $s$, is the unique point in $\operatorname{int} K$ such that
\[V(K^s)\leq V(K^x)\quad \forall x\in \operatorname{int}K.\]
Moreover, the Blaschke--Santal\'{o} inequality states that
\[V(K)V(K^s)\leq V(B)^2.\]
with equality if and only if $K$ is an ellipsoid.

Let $K,L$ be two convex bodies and $0<a<\infty.$ The Minkowski sum $K+aL$ is defined by $h_{K+aL}=h_K+ah_L$ and the mixed volume of $K,L$ is defined by
\[V_1(K,L)=\frac{1}{n}\lim_{a\to 0}\frac{V(K+aL)-V(K)}{a}.\]
Corresponding to each $K,$ there is a unique Borel measure $S_K$ on the unit sphere such that
\[V_1(K,L)=\frac{1}{n}\int_{\Sp}h_LdS_K\quad \textrm{for any convex body}~L.\]
If the boundary of $K,$ $\partial K$, is $C^2$-smooth and strictly convex , then $S_K$ is absolutely continuous with respect to spherical Lebesgue measure $\sigma$ and $dS_K/d\sigma$ is the reciprocal Gauss curvature.

The Minkowski mixed volume inequality states that
\[V_1(K,L)^n\geq V(K)^{n-1}V(L),\]
and equality holds if and only if $K$ and $L$ are homothetic.

We say $K$ has a positive continuous curvature function $f_K$ if
\[V_1(K,L)=\frac{1}{n}\int_{\Sp}h_Lf_Kd\sigma\quad\textrm{for any convex body}~L.\]

Write $\mathcal{F}$ for set of convex bodies with positive continuous curvature functions, and put \[\mathcal{F}_o=\mathcal{K}_o\cap \mathcal{F},\quad\mathcal{F}_e=\mathcal{K}_e\cap \mathcal{F}.\]
\section{Curvature image operators}
\begin{assumption}\label{assumption}Suppose one of the following cases occurs.
\begin{enumerate}
  \item $-n<p\neq 1<\infty$, $\varphi\in  C_e^+(\Sp)$ and $K\in \mathcal{K}_e.$
  \item $-n< p\leq -n+1$, $\varphi\in  C^+(\Sp)$, $K\in \mathcal{K}_o$ and
  \[\int_{\Sp}\frac{u}{(\varphi h_K^{1-p})(u)}d\sigma=o.\]
  \item $-n\leq p\neq 1< \infty$, $K\in \mathcal{K}_o$ and
  \[\int_{\Sp}uh_K^{p-1}(u)d\sigma=o.\]
\end{enumerate}
\end{assumption}
\begin{definition}\label{def curv op} Under the Assumption \ref{assumption}, the curvature image $\Lambda_p^{\varphi} K$ of $K$ is defined as the unique convex body whose curvature function is
\begin{align}\label{curve image}
f_{\Lambda_p^{\varphi} K}=\frac{V(K)}{\frac{1}{n}\int_{\Sp}\frac{h^p_K}{\varphi}d\sigma}\frac{h_K^{p-1}}{\varphi}
\end{align}
and its support function satisfies
\begin{align}\label{curve image-1}
\int_{\Sp} \frac{u}{(\varphi h_{\OP K}^{1-p})(u)}d\sigma=o.
\end{align}
\end{definition}
\begin{remark}
By Minkowski's existence theorem and \cite{Ivaki2016}*{Lemma 3.1}, there \emph{exists a unique} convex body that satisfies (\ref{curve image}) and (\ref{curve image-1}). The integral condition (\ref{curve image-1}) for $p=-n$ and $\varphi\equiv 1$ says the curvature image has its Santal\'{o} point at the origin.
\end{remark}
In view of $V_1(\Lambda_p^{\varphi} K,K)=V(K)$ and Minkowski's mixed volume inequality, we have
\begin{align}\label{key}
V(K)\geq V(\Lambda_p^{\varphi} K).
\end{align}
Moreover, equality holds if and only if $\OP K=K.$
\begin{definition}\label{def} Suppose $\varphi\in  C^+(\Sp)$. For $K\in \mathcal{K}_o$, we define
\[
\mathcal{A}_p^{\varphi}(K)=\begin{cases}
V(K)\left(\int_{\Sp}\frac{h_K^p}{\varphi}d\sigma\right)^{-\frac{n}{p}}, &0\neq p\in [-n,\infty),\\
V(K)\exp\left(\frac{\int_{\Sp}-\frac{1}{\varphi}\log h_Kd\sigma}{\frac{1}{n}\int_{\Sp}\frac{1}{\varphi}d\sigma}\right), & p=0.
\end{cases}
\]
Let $K\in \mathcal{F}$. Define
\begin{itemize}
  \item  for $p\in [-n,\infty)\setminus\{0,1\}:$
  \[\mathcal{B}_p^{\varphi}(K)=V(K)^{1-n}\left(\int_{\Sp}\varphi^{\frac{1}{p-1}}f_K^{\frac{p}{p-1}}d\sigma\right)^{\frac{n(p-1)}{p}},\]
  \item for $p=0:$
  \[\mathcal{B}_0^{\varphi}(K)=V(K)^{1-n}\exp\left(\int_{\Sp} \log (\varphi f_K)d\theta\right)^n\left(\int_{\Sp}\frac{1}{\varphi}d\sigma\right)^n,\]
\end{itemize}
where $d\theta=\frac{\frac{1}{\varphi}}{\int_{\Sp}\frac{1}{\varphi}d\sigma}d\sigma.$

For $K\in \mathcal{F}$, define
\[
\Omega_p^{\varphi}(K)=\begin{cases}
\int_{\Sp}\varphi^{\frac{1}{p-1}}f_K^{\frac{p}{p-1}}d\sigma, & p\in [-n,\infty)\setminus\{0,1\},\\
\exp\left(\frac{\int_{\Sp}\frac{1}{\varphi}\log f_Kd\sigma}{\frac{1}{n}\int_{\Sp}\frac{1}{\varphi}d\sigma}\right), & p=0.
\end{cases}
\]
\end{definition}
A straightforward calculation shows that
\begin{align}\label{key identity}
\mathcal{B}_p^{\varphi}(\Lambda_p^{\varphi}K)=n^{n}\left(\frac{V(K)}{V(\Lambda_p^{\varphi}K)}\right)^{n-1}\mathcal{A}_p^{\varphi}(K).
\end{align}
For $L\in \mathcal{F}$ and $x\in \operatorname{int}L$, by the H\"{o}lder and Jensen inequalities
\begin{align}\label{holder}
\begin{cases}
\mathcal{B}_p^{\varphi}(L)\leq n^{n}\mathcal{A}_p^{\varphi}(L-x), & p\in [-n,1)\\
 \mathcal{B}_p^{\varphi}(L)\geq n^{n}\mathcal{A}_p^{\varphi}(L-x), & p>1.
\end{cases}
\end{align}

From now onward, we only focus on the case $p\in (-n,1)$ and establish the desired properties mentioned in the introduction. For $p>1$, the second inequality in (\ref{holder}) is in the wrong direction and hence $\Lambda_p^{\varphi}$ does not exhibit the same behavior as $\Lambda$ does.
\begin{lemma}\label{lem1}
Suppose Assumption \ref{assumption} holds and $p<1.$ We have the following.
\begin{enumerate}
  \item $\mathcal{A}_p^{\varphi}(K)\leq \left(\frac{V(\Lambda_p^{\varphi}K)}{V(K)}\right)^{n-1}\mathcal{A}_p^{\varphi}(\OP K)\leq \mathcal{A}_p^{\varphi}(\OP K).$
  \item If $p\neq 0$, then \[\Omega_p^{\varphi}(K)^{\frac{n(p-1)}{p(n-1)}}\leq \Omega_p^{\varphi}(\OP K)^{\frac{n(p-1)}{p(n-1)}}.\] If $p=0$, then
  \[\Omega_0^{\varphi}(K)\leq \Omega_0^{\varphi}(\Lambda_0^{\varphi} K).\]
  \item If $p\neq 0$, then \[c_{p}^{\varphi}\Omega_p^{\varphi}(K)^{\frac{n(p-1)}{p(n-1)}}\leq V((\OP)^i K)\leq V(K).\] If $p=0,$ then
  \[c_0^{\varphi}\Omega_0^{\varphi}(K)^{\frac{1}{n-1}}\leq V((\Lambda_0^{\varphi})^i K)\leq V(K).\]
\end{enumerate}
\end{lemma}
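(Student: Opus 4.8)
The plan is to derive all three parts from four ingredients already in hand: the key identity \eqref{key identity}, the H\"older--Jensen inequality \eqref{holder}, the volume monotonicity \eqref{key}, and---for the lower bound in (3) only---the Blaschke--Santal\'o inequality. First I would record two purely algebraic identities. Directly from Definition~\ref{def}, $\mathcal B_p^\varphi(L)=V(L)^{1-n}\,\Omega_p^\varphi(L)^{\frac{n(p-1)}{p}}$ when $p\neq 0$, while for $p=0$, splitting $\log(\varphi f_L)=\log\varphi+\log f_L$ yields $\mathcal B_0^\varphi(L)=c_\varphi\,V(L)^{1-n}\,\Omega_0^\varphi(L)$ with $c_\varphi:=\exp\bigl(n\int_{\Sp}\log\varphi\,d\theta\bigr)\bigl(\int_{\Sp}\varphi^{-1}\,d\sigma\bigr)^{n}>0$. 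Feeding these into \eqref{key identity} and cancelling the common factor $V(\OP K)^{1-n}$ gives
\[
\Omega_p^\varphi(\OP K)^{\frac{n(p-1)}{p}}=n^{n}\,V(K)^{n-1}\,\mathcal A_p^\varphi(K)\quad(p\neq0),\qquad \Omega_0^\varphi(\Lambda_0^\varphi K)=\frac{n^{n}}{c_\varphi}\,V(K)^{n-1}\,\mathcal A_0^\varphi(K),
\]
which convert $\Omega$-estimates into $\mathcal A$-estimates and back.

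For part (1), \eqref{key identity} says $\mathcal A_p^\varphi(K)=n^{-n}\bigl(V(\OP K)/V(K)\bigr)^{n-1}\mathcal B_p^\varphi(\OP K)$; since $\OP K\in\mathcal K_o$, I may apply \eqref{holder} to $L=\OP K$ with $x=o\in\operatorname{int}(\OP K)$ to get $\mathcal B_p^\varphi(\OP K)\le n^{n}\mathcal A_p^\varphi(\OP K)$, which is exactly the first inequality; the second is immediate from \eqref{key} and $n\ge2$. For part (2) (assuming in addition $K\in\mathcal F$, automatic for every iterate) I would combine $\Omega_p^\varphi(K)^{\frac{n(p-1)}{p}}=V(K)^{n-1}\mathcal B_p^\varphi(K)$, the inequality $\mathcal B_p^\varphi(K)\le n^{n}\mathcal A_p^\varphi(K)$ from \eqref{holder} at $(L,x)=(K,o)$, and the first displayed identity, to obtain $\Omega_p^\varphi(K)^{\frac{n(p-1)}{p}}\le\Omega_p^\varphi(\OP K)^{\frac{n(p-1)}{p}}$; raising to the power $\frac1{n-1}>0$ gives the stated monotonicity, and $p=0$ is identical via $c_\varphi$. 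Then one checks that each iterate $(\OP)^{j}K$ again satisfies Assumption~\ref{assumption}---the normalization \eqref{curve image-1} is precisely the integral hypothesis of cases (2) and (3), and in case (1) the curvature function \eqref{curve image} is even so $(\OP)^{j}K\in\mathcal K_e$---so (2) propagates along the orbit and $i\mapsto\Omega_p^\varphi((\OP)^{i}K)^{\frac{n(p-1)}{p}}$ is nondecreasing.

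For part (3), the upper bound is iteration of \eqref{key}. For the lower bound I would rewrite the first displayed identity (applied to $L$ itself) as
\[
V(L)=\Omega_p^\varphi(L)^{\frac{n(p-1)}{p(n-1)}}\,\mathcal B_p^\varphi(L)^{-\frac1{n-1}},
\]
so that it suffices to establish a uniform bound $\mathcal B_p^\varphi(L)\le C=C(n,p,\varphi)$ for all $L\in\mathcal F$: granting this and taking $L=(\OP)^{i}K$, the monotonicity from (2) gives $\Omega_p^\varphi(K)^{\frac{n(p-1)}{p(n-1)}}\le\Omega_p^\varphi(L)^{\frac{n(p-1)}{p(n-1)}}=V(L)\,\mathcal B_p^\varphi(L)^{\frac1{n-1}}\le C^{\frac1{n-1}}V((\OP)^{i}K)$, so $c_p^\varphi:=C^{-1/(n-1)}$ works ($p=0$ the same, absorbing $c_\varphi$). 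To bound $\mathcal B_p^\varphi(L)$ I would use \eqref{holder} with $x=s_L$ the Santal\'o point, giving $\mathcal B_p^\varphi(L)\le n^{n}\mathcal A_p^\varphi(L-s_L)$; since $0<\min_{\Sp}\varphi\le\varphi\le\max_{\Sp}\varphi<\infty$ the weight $\varphi$ only costs a constant factor, reducing matters to bounding $V(L)\bigl(\int_{\Sp}h_{L-s_L}^{p}\,d\sigma\bigr)^{-n/p}$; writing $h_{L-s_L}^{p}=(h_{L-s_L}^{-n})^{-p/n}$ and applying Jensen's inequality to $t\mapsto t^{-p/n}$ (convex for $p>0$, concave for $p<0$, in each case matching the sign of the outer exponent $-n/p$) bounds this quantity by $\sigma(\Sp)^{-\frac np-1}\,n\,V(L)V(L^{s_L})$ because $\int_{\Sp}h_{L-s_L}^{-n}\,d\sigma=nV(L^{s_L})$, and the Blaschke--Santal\'o inequality $V(L)V(L^{s_L})\le V(B)^2$ closes the estimate.

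Parts (1) and (2) are essentially bookkeeping with \eqref{key identity}, \eqref{holder}, and \eqref{key}. The one genuine input is the uniform bound $\mathcal B_p^\varphi(L)\le C(n,p,\varphi)$ in part (3)---an $L_p$ affine-isoperimetric inequality with weight $\varphi$---and the point requiring care is that Jensen's inequality reverses direction as $p$ crosses $0$ at the same time as the outer exponent $-n/p$ does, so one has to verify the two reversals are compatible; carrying the $\varphi$-bounds and the exponent $\frac1{n-1}$ along correctly is the only remaining (mechanical) chore.
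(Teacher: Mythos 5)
Your proposal is correct, and for parts (1) and (2) it is exactly the paper's argument: combine \eqref{key identity} with \eqref{holder} applied to $L=K$ and $L=\OP K$ (at $x=o$), then use \eqref{key} to drop the volume ratio. For part (3) the overall skeleton also matches — everything reduces to a uniform upper bound $\mathcal{B}_p^{\varphi}(L)\leq C(n,p,\varphi)$ for all $L\in\mathcal{F}$, which converts the monotone quantity $\Omega_p^{\varphi}(L)^{\frac{n(p-1)}{p(n-1)}}=V(L)\,\mathcal{B}_p^{\varphi}(L)^{\frac{1}{n-1}}$ into a lower bound for $V(L)$ — but you obtain that bound differently. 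The paper, for $p\neq 0$, cites Andrews--Guan--Ni (Theorem 9.2) for the existence of a point $e_p\in\operatorname{int}L$ with $\mathcal{A}_p^{1}(L-e_p)\leq\mathcal{A}_p^{1}(B)$ and then absorbs $\varphi$ into the constant; you instead center at the Santal\'o point and prove the bound from scratch by writing $h^p=(h^{-n})^{-p/n}$, applying Jensen to $t\mapsto t^{-p/n}$ (correctly observing that the concavity/convexity of this map and the sign of the outer exponent $-n/p$ flip simultaneously at $p=0$, so the inequality always lands in the right direction), and finishing with $\int_{\Sp}h_{L-s_L}^{-n}d\sigma=nV(L^{s_L})$ and the Blaschke--Santal\'o inequality. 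This is precisely the computation the paper carries out only for $p=0$, so your route is a uniform, self-contained treatment of all $p\in[-n,1)$ that removes the external citation (at the cost of a less sharp, but irrelevant, constant). Your additional housekeeping — checking that each iterate again satisfies Assumption \ref{assumption} via the normalization \eqref{curve image-1}, and that $(\OP)^jK\in\mathcal{F}$ so that $\Omega_p^{\varphi}$ and $\mathcal{B}_p^{\varphi}$ are defined along the orbit — is a point the paper leaves implicit, and it is worth having made explicit.
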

\begin{proof}
Inequalities of (1) and (2) follow from (\ref{key}), (\ref{key identity}), and (\ref{holder}) applied to $L=K$ and $L=\OP K.$

In view of \cite{AGN}*{Theorem 9.2}, for each $L\in \mathcal{K},$ there exists $e_p\in \operatorname{int}L$ such that
\[\mathcal{A}_p^{1}(L-e_p)\leq \mathcal{A}_p^{1}(B).\]
Therefore, for $p\neq 0$, due to (\ref{holder}) we see
\[\mathcal{B}_p^{\varphi}(L)\leq c_{p,\varphi}\quad\textrm{for any convex body}~L\in \mathcal{F}.\]
In particular, for $p\neq 0$, owing to (2), this last inequality yields
\[c_{p,\varphi}V(\OP K)\geq \Omega_p^{\varphi}(\OP K)^{\frac{n(p-1)}{p(n-1)}}\geq \Omega_p^{\varphi}(K)^{\frac{n(p-1)}{p(n-1)}}.\]
Now, (3) follows by induction.

The proof for the case $p=0$ is similar and it follows from the following inequality. By (\ref{holder}), the Jensen and Blaschke--Santal\'{o} inequality, for any convex body $L\in \mathcal{F}$ we have
\begin{align*}
\mathcal{B}_0^{\varphi}(L)\leq n^{n}\mathcal{A}_0^{\varphi}(L-s)&\leq n^nV(L)\frac{\int_{\Sp}\frac{1}{\varphi h^n_{L^s}}d\sigma}{\int_{\Sp}\frac{1}{\varphi}d\sigma}\leq c_{\varphi},
\end{align*}
where $s$ is the Santal\'{o} point of $L.$
\end{proof}
\section{Passing to a limit}
In this section, we give the proof of Theorem \ref{main theorem}. First we consider the case $p\neq -n.$ By Lemma \ref{lem1}, the operator $\OP$ satisfies the three principals mentioned in the introduction. Therefore, by \cite{Ivaki2016}*{Theorem 7.4}, there are constants $a,b$, such that
\begin{align}\label{C0 estimate}
a\leq h_{(\OP)^i K}\leq b\quad \forall i.
\end{align}
Due to the monotonicity of $\mathcal{A}_p^{\varphi}$ under $\OP$,
\[\lim_{i\to \infty}\mathcal{A}_p^{\varphi}((\OP)^iK)~\mbox{exists and is positive}\]
Thus, in view of (\ref{key}) and
\[\mathcal{A}_p^{\varphi}((\OP)^iK)\leq \left(\frac{V((\OP)^{i+1}K)}{V((\OP)^{i}K)}\right)^{n-1}\mathcal{A}_p^{\varphi}((\OP)^{i+1}K),\]
we arrive at
\[\lim_{i\to\infty}\frac{V((\OP)^{i+1}K)}{V((\OP)^{i}K)}=1.\]
By (\ref{C0 estimate}) and the Blaschke selection theorem, for a subsequence $\{i_j\}$:
\[\lim_{j\to\infty}(\OP)^{i_j}K=L\in \mathcal{K}_o,\]
From the continuity of $\OP$ (cf. \cite{Ivaki2016}*{Theorem 7.6}), it follows that
\[\lim_{j\to\infty}(\OP)^{i_j+1}K=\OP L. \]
Consequently, we must have
\begin{align*}
V(L)=\lim_{j\to\infty}V((\OP)^{i_j}K)=\lim_{j\to\infty}V((\OP)^{i_j+1}K)=V(\OP L).
\end{align*}
Now, the equality case of (\ref{key}) implies that
$\OP L=L$ and hence,
\begin{align*}\label{limit}
\varphi h_L^{1-p}f_{L}=\frac{V(L)}{\frac{1}{n}\int_{\Sp}\frac{h_L^p}{\varphi}d\sigma}.
\end{align*}
Regarding the case $-n<p<1$ and $\varphi \equiv 1$, first note that due to the result of \cite{BCD} the limiting shapes are origin-centered balls. To show that in fact they are the same ball, note that due to the monotonicity of the volume under $\Op$, the limits have the same volume.

Finally, regarding the third claim of Theorem \ref{main theorem}, $p=-n$, recall that $\{V(\Lambda^i K)\}$ is uniformly bounded above and below. For each $i$, by Petty \cite{Petty61} (see also \cite{Thompson}*{Theorem 5.5.14}), we can find $\ell_i\in SL(n)$ such that $\ell_i\Lambda^i K$ is in a minimal position, that is, its surface area is minimal among its volume-preserving affine transformations. Therefore, for a subsequence $\{i_j\}$, we have
\[\lim_{j\to\infty}\ell_{i_j}\Lambda^{i_j} K=L\in\mathcal{K}_o,\]
and $L$ has its Santal\'{o} point at the origin (in fact, this follows from $s(\ell_{i_j}\Lambda^{i_j} K)=\ell_{i_j}s(\Lambda^{i_j} K)=o$ and that $s$ is a continuous map with respect to the Hausdorff distance).
In particular, by \cite{Lutwak90}*{(7.12)} and the continuity of the curvature image operator we obtain
 \[\lim_{j\to\infty}\ell_{i_j}\Lambda^{i_j+1} K=\lim_{j\to\infty}\Lambda(\ell_{i_j}\Lambda^{i_j} K)= \Lambda L.\]
Meanwhile by the monotonicity of the volume product and its upper bound due to the Blaschke-Santal\'{o} inequality, we have
\[V(L)=\lim_{j\to\infty}V(\ell_{i_j}\Lambda^{i_j}K)=\lim_{j\to\infty}V(\ell_{i_j}\Lambda^{i_j+1}K)=V(\Lambda L).\]
Therefore, $\Lambda L=L$. By \cite{MP}, $L$ is an origin-centered ellipsoid. Since this ellipsoid is in a minimal position, it has to be a ball. The limit is independent of the subsequences as in the case (2) of the theorem. The proof of the theorem is finished.
\section{Questions}
\begin{enumerate}
  \item Let $p>1$ and $\varphi\in C^+_e(\mathbb{S}^{n-1}).$ It would be of interest to find a curvature operator whose iterations applied to any $K\in\mathcal{K}_e$ converge to the solution of the $L_p$ Minkowski problem with the prescribed even data $\varphi$.
  \item Is the limit in Theorem \ref{main theorem} independent of the subsequence?
\end{enumerate}
\bibliographystyle{amsplain}

\begin{thebibliography}{99}
\bibitem{AGN} B. Andrews, P. Guan, L. Ni, \textit{Flow by powers of the Gauss curvature}, Adv. in Math. 299(2016): 174--201.
\bibitem{Barany97} I. B\'{a}r\'{a}ny, \textit{Affine perimeter and limit shape}, J. Reine Angew. Math. 484(1997): 71--84.
\bibitem{Barany06} I. B\'{a}r\'{a}ny, M. Prodromou, \textit{On maximal convex lattice polygons inscribed in a plane convex set}, Israel J. Math. 154(2006): 337--360.
\bibitem{BCD} S. Brendle, K. Choi, P. Daskalopoulos, \textit{Asymptotic behavior of flows by powers of the Gaussian curvature}, Acta Math. 219(2017): 1--16.
\bibitem{BBCY} G. Bianchi, K. J. B\"{o}r\"{o}czky, A. Colesanti, D. Yang, \textit{The $L_p$ Minkowski problem for $-n<p<1$}, Adv. in Math. 341(2019): 493--535.
\bibitem{BBC} G. Bianchi, K. J. B\"{o}r\"{o}czky, A. Colesanti, \textit{Smoothness in the $L_p$ Minkowski problem for $p<1$}, J. Geom. Anal. 30(2020): 680--705.
\bibitem{Busemann} H. Busemann, \textit{Convex Surfaces}, Interscience Publishers, New York, 1958.
\bibitem{BLYZa} K. J. B\"{o}r\"{o}czky, E. Lutwak, D. Yang, G. Zhang, \textit{The logarithmic Minkowski problem}, J. Amer. Math. Soc. 26(2013): 831--852.
\bibitem{BT17} K. J. B\"{o}r\"{o}czky, H. T. Trinh, \textit{The planar $L_p$-Minkowski problem for $0 < p < 1$}, Adv. in Appl. Math. 87(2017): 58--81.
\bibitem{CHLL} S. Chen, Y. Huang, Q. R. Li, J. Liu, \textit{The $L_p$-Brunn-Minkowski inequality for $p\in (1-\frac{c}{n^{\frac{3}{2}}})$}, Preprint, 2018, 	arXiv:1811.10181.
\bibitem{CW} K. S. Chou and X. J. Wang, \textit{The $L_p$-Minkowski problem and the Minkowski problem in centroaffine geometry}, Adv. in Math. 205(2006): 33--83.
\bibitem{FNRZ} A. Fish, F. Nazarov, D. Ryabogin, A. Zvavitch, \textit{The unit ball is an attractor of the intersection body operator}, Adv. in Math. 226(2011): 2629--2642.
\bibitem{BIS} P. Bryan, M. N. Ivaki, J. Scheuer, \textit{A unified flow approach to smooth even $L_p$ Minkowski problem}, Analysis \& PDE 12(2019): 259--280.
\bibitem{Ivaki2015} M. N. Ivaki, \textit{Stability of the Blaschke--Santal\'{o} inequality in the plane}, Monatsh. Math. 177(2015): 451--459.
\bibitem{Ivaki2016} M. N. Ivaki, \textit{Deforming a hypersurface by Gauss curvature and support function}, J. Func. Anal. 271(2016): 2133--2165.
\bibitem{Ivaki2017} M. N. Ivaki, \textit{The second mixed projection problem and the projection centroid conjectures}, J. Func. Anal. 272(2017): 5144--5161.
\bibitem{Ivaki2018} M. N. Ivaki, \textit{A local uniqueness theorem for minimizers of Petty's conjectured projection inequality}, Mathematika 64(2018): 1--19.
\bibitem{Ivakitran} M. N. Ivaki, \textit{The planar Busemann-Petty centroid inequality and its stability}, Trans. Amer. Math. Soc. 368(2016): 3539--3563.
\bibitem{JLW} H. Jian, J. Lu, and X. J. Wang, \textit{Nonuniqueness of solutions to the $L_p$-Minkowski problem} Adv. in Math. 281(2015): 845--856.
\bibitem{Petty}  C. M. Petty, \textit{Affine isoperimetric problems}, Ann. N.Y. Acad. Sci. 440(1985): 113--127.
\bibitem{Petty61} C. M. Petty, \textit{Surface area of a convex body under affine transformations}, Proc. Amer. Math. Soc. 12(1961): 824--828.
\bibitem{MP} M. Marini, G. De Philippis, \textit{A note on Petty's problem}, Kodai Math. J. 37(2014): 586--594.
\bibitem{Lutwak86} E. Lutwak, \textit{On some affine isoperimetric inequalities}, J. Differ. Geom. 23(1986): 1--13.
\bibitem{Lutwak90} E. Lutwak, \textit{Centroid bodies and dual mixed volumes}, Proc. London Math. Soc. 60(1990): 365--391.
\bibitem{Lutwak1993} E. Lutwak, \textit{The Brunn--Minkowski--Firey theory, I: Mixed volumes and the Minkowski problem}, J. Differ. Geom. 38(1993): 131--150.
\bibitem{LO} E. Lutwak, V. Oliker, \textit{On the regularity of solutions to a generalization of the Minkowski problem}, J. Differ. Geom. 41(1995): 227--246.
\bibitem{LYZapp} E. Lutwak, D. Yang, G. Zhang, \textit{Sharp affine $L_p$ Sobolev inequalities}, J. Differ. Geom. 62(2002): 17--38.
\bibitem{ZS} C. Saroglou, A. Zvavitch, \textit{Iterations of the projection body operator and a remark on Petty's conjectured projection inequality}, J. Func. Anal. 272(2017):  613--630.
\bibitem{Schneider14} R. Schneider, \textit{Affine surface area and convex bodies of elliptic type}, Period. Math. Hungar. 69(2014): 120--125.
\bibitem{Schneider} R. Schneider, \textit{Convex Bodies: the Brunn--Minkowski Theory}, 2nd edn., Vol. 151, Cambridge University Press (New York, NY, 2013).
\bibitem{Stancu02} A. Stancu, \textit{The discrete planar $L_0$-Minkowski problem}, Adv. in Math. 167(2002): 160--174.
\bibitem{Stancu03} A. Stancu, \textit{On the number of solutions to the discrete two-dimensional $L_0$-Minkowski problem}, Adv. in Math. 180(2003): 290--323.
\bibitem{Thompson} A. C. Thompson,  \textit{Minkowski geometry}, Cambridge University Press, (Cambridge, 1996).
\end{thebibliography}

\end{document}